\newcommand{\Z}{\mathbf{Z}}
\newcommand{\R}{\mathbf{R}}
\newcommand{\Q}{\mathbf{Q}}
\renewcommand{\phi}{\varphi}
\DeclareMathOperator*{\conv}{conv}
\DeclareMathOperator*{\cone}{cone}
\renewcommand{\P}{{\mathcal P}}
\newcommand{\e}{{\mathbf e}}
\newcommand{\m}{{\mathbf m}}
\renewcommand{\v}{{\mathbf v}}
\newcommand{\x}{{\mathbf x}}
\newcommand{\y}{{\mathbf y}}
\newcommand{\z}{{\mathbf z}}
\def\th{^{\text{th}}}
\newcommand{\vol}{\operatorname{vol}}
\newcommand{\F}{\mathcal{F}}
\newcommand{\Solid}{\operatorname{Solid}}
\newcommand{\eps}{\varepsilon}
\newtheorem{theorem}{Theorem}
\newtheorem{proposition}[theorem]{Proposition}
\newtheorem{lemma}[theorem]{Lemma}
\newtheorem{eg}[theorem]{Example}
\newtheorem*{definition}{Definition}
\newcommand\comment[1]{}                
\renewcommand\comment[1]{\emph{[#1]}}           
\title{Positivity Theorems for Solid-Angle Polynomials}
\author{Matthias Beck}
\address{Department of Mathematics,
San Francisco State University,
San Francisco, CA 94132, U.S.A.}
\email{beck@math.sfsu.edu}
\author{Sinai Robins}
\address{Division of Mathematical Sciences,
School of Physical and Mathematical Sciences,
Nanyang Technological University,
Singapore, 637371}
\email{rsinai@ntu.edu.sg}
\author{Steven V Sam}
\address{Department of Mathematics,
Massachusetts Institute of Technology,
Cambridge, MA 02139, U.S.A.}
\email{ssam@math.mit.edu}
\date{June 26, 2015}
\thanks{The authors thank an anonymous referee for helpful comments.
  Matthias Beck was partially supported by NSF grant DMS-0810105, 
  Sinai Robins was supported by an NTU academic research fund SUG grant, 
  and Steven Sam was partially supported by NDSEG and NSF graduate fellowships.
  }
\keywords{solid angle, lattice polytope, Ehrhart polynomial, lattice points}
\subjclass[2000]{Primary 28A75; Secondary 05A15, 52C07.}
\begin{document}

\maketitle

\begin{abstract} For a lattice polytope $\P$, define $A_{\P}(t)$ as
  the sum of the solid angles of all the integer points in the dilate
  $t\P$. Ehrhart and Macdonald proved that $A_{\P}(t)$ is a polynomial
  in the positive integer variable $t$. We study the numerator
  polynomial of the solid-angle series $\sum_{ t \ge 0 } A_\P(t) z^t$. In
  particular, we examine nonnegativity of its coefficients,
  monotonicity and unimodality questions, and study extremal behavior
  of the sum of solid angles at vertices of simplices. Some of our
  results extend to more general valuations.
\end{abstract}


\section{Introduction.}

Suppose $\P \subset \R^d$ is a $d$-dimensional polytope with integer
vertices (a \emph{lattice polytope}). Unless otherwise stated, we
shall assume throughout that $\P$ is full-dimensional. Let $B(r,\x)$
be the $d$-dimensional ball of radius $r$ centered at the point $\x
\in \R^d$. Then we define the \emph{solid angle at $\x$ with respect
  to} $\P$ to be
\[ 
\omega_{\P}(\x) := \lim_{r \to 0} \frac{\vol (B(r,\x) \cap
  \P)}{\vol(B(r,\x))}\,.
\] 
The fraction above measures the proportion of a small sphere of radius $r$ that intersects the polytope $\P$ and is hence constant for all sufficiently small $r > 0$, so that the limit always exists.   
We note that the notion of a solid angle $\omega_{\P}(\x)$  is equivalent to the notion of the volume of a spherical polytope on the unit sphere, normalized by dividing by the volume of the boundary of the unit sphere. 

We are interested in weighing every lattice point $x \in \Z^d$ by its corresponding solid angle  $ \omega_{t\P}(\x)$, and summing these weights over the whole lattice.  To this end, we consider the function $A_\P \colon \Z_{ >0 } \to \R$ defined by
\[ 
A_{\P}(t) := \sum_{\x \in \Z^d} \omega_{t\P}(\x) \, ,
\] 
which we call the \emph{solid-angle polynomial} of $\P$.  Here $t\P =
\{ t\x \mid \x \in \P \}$ denotes the $t\th$ dilate of $\P$.  The fact
that $A_\P(t)$ is indeed a polynomial follows from Ehrhart's
celebrated theorem that the lattice-point enumerator
\[ 
L_{\P}(t) := \# \left( t\P \cap \Z^d \right) ,
\]
also initially defined only on the positive integers, is in fact a polynomial
in $t \in \Z$.  We will review a few basic facts about $A_\P(t)$ and
$L_\P(t)$ in Section~\ref{backgroundsection}.

Solid-angle polynomials are not as widely known and studied as they
deserve to be. This paper contains a few results that seem basic and
yet have been unknown thus far.     

Some of our results answer various open problems in \cite[Chapter
11]{ccd}. While Ehrhart polynomials can be computed using programs
such as {\tt LattE} \cite{lattemanual,koeppe}, {\tt normaliz}
\cite{normaliz}, or {\tt polylib} \cite{polylib}, there is currently
no software available for computing solid-angle polynomials, so it is
difficult to obtain data for making conjectures. Recent activity on
solid angles can be found in \cite{anglesums} and
\cite{realpolytopes}.

In Section~\ref{formulasection}, we give some formulas related to
calculations of solid angles. We also
address the question of whether there are polytopes for which the
polynomial $A_\P$ has negative coefficients. The equivalent question
regarding Ehrhart polynomials $L_\P(t)$ has a positive answer, as exemplified
by \emph{Reeve's tetrahedron} $\P_h$ whose vertices are $(0,0,0)$,
$(1,0,0)$, $(0,1,0)$, and $(1,1,h)$, with $h$ a fixed positive
integer. This tetrahedron was used by Reeve \cite{reeve} to show that
no linear analogue of Pick's theorem can hold in dimension 3. The Ehrhart
polynomial of $\P_h$ is
\begin{equation} \label{tetrahedron} L_{\P_h}(t) = \frac{h}{6}t^3 +
  t^2 + \left(2-\frac{h}{6}\right)t + 1\, .
\end{equation}
Thus for $h>12$ one obtains Ehrhart polynomials with negative
coefficients. Reeve's tetrahedron allows us to construct solid-angle
polynomials with negative coefficients as well:

\begin{proposition} \label{reeveprop} The linear coefficient of
  $A_{\P_h}(t)$ is negative.
\end{proposition}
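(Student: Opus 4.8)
The plan is to compute the solid-angle polynomial $A_{\P_h}(t)$ explicitly—or at least its linear coefficient—and show it is negative for every positive integer $h$. My starting point is the relationship between solid angles and the Ehrhart polynomial. The total solid-angle sum over all lattice points splits naturally by the dimension of the face containing each point: a point in the interior of $t\P$ contributes solid angle $1$, a point in the relative interior of a facet contributes $\tfrac{1}{2}$, and points on lower-dimensional faces contribute according to the local cone geometry. Since $A_{\P_h}(t)$ is already known to be a polynomial, I only need its value at enough integers (or its behavior relative to known polynomials) to pin down the coefficient of $t$.

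First I would write $A_{\P_h}(t) = L_{\P_h}(t) - \tfrac{1}{2} L_{\partial \P_h}(t) + (\text{corrections from edges and vertices})$, where $L_{\partial\P_h}(t)$ counts boundary lattice points. More precisely, the cleanest route is to use the decomposition of the interior solid-angle sum: each interior lattice point gets weight $1$, so the bulk of $A_{\P_h}(t)$ equals the number of interior lattice points $L_{\P_h^{\circ}}(t)$ plus boundary contributions. I would compute the solid angles at the four vertices of $\P_h$ and along its six edges. The vertices contribute constants (independent of $t$), the edges contribute terms linear in $t$ after summing the edge solid angle over the interior lattice points of each edge, and the facets contribute the $\tfrac{1}{2}$-weighted facet interiors. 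Extracting the coefficient of $t^1$ from this assembled expression, together with the known cubic term $\tfrac{h}{6}t^3$ and the symmetry/reciprocity $A_{\P_h}(-t) = (-1)^d A_{\P_h}(t)$ (solid-angle polynomials satisfy an exact Ehrhart–Macdonald reciprocity with no correction term), should isolate the linear coefficient.

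In fact, reciprocity is the key shortcut. Because $A_{\P}(t)$ satisfies $A_{\P}(-t) = (-1)^d A_{\P}(t)$ for a $d$-dimensional lattice polytope, in dimension $d=3$ the polynomial $A_{\P_h}$ is odd, so it has no constant or quadratic term and can be written as $A_{\P_h}(t) = a_3 t^3 + a_1 t$. The leading coefficient $a_3$ equals the normalized volume $\tfrac{h}{6}$ (the same as for $L_{\P_h}$, since solid-angle and lattice-point counts share leading term). Thus it remains only to determine $a_1$, and I can do this by evaluating $A_{\P_h}(1)$: then $a_1 = A_{\P_h}(1) - \tfrac{h}{6}$. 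Computing $A_{\P_h}(1)$ means summing the solid angles of all lattice points of the undilated tetrahedron $\P_h$, which has few lattice points—the four vertices, plus the single edge from $(1,0,0)$–$(0,1,0)$ passing through no interior lattice point, and possibly lattice points along the long edge to $(1,1,h)$. This is a finite, hands-on computation.

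The main obstacle I anticipate is the accurate evaluation of the solid angles at the vertices of $\P_h$, particularly at $(1,1,h)$ and the apex geometry as $h$ grows: these are genuine three-dimensional spherical-triangle computations, and I would need either the spherical-excess formula or a careful direct argument to evaluate them, then verify that the resulting $a_1 = A_{\P_h}(1) - \tfrac{h}{6}$ is negative for all $h \ge 1$. I expect the vertex solid angles to sum to a quantity that is bounded (indeed the four solid angles of any tetrahedron sum to less than $\tfrac{1}{2}$), while the subtracted $\tfrac{h}{6}$ grows linearly, which makes negativity of $a_1$ plausible for all $h$ and certain for large $h$; the delicate part is confirming it even for small $h$ where the volume term is not yet dominant. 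If the direct spherical computation proves unwieldy, the fallback is to evaluate $A_{\P_h}$ at two integer points and solve the resulting linear system for $a_1$, trading geometric bookkeeping for two lattice-point-weighted sums.
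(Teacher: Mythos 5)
Your proposal follows essentially the same route as the paper: Macdonald reciprocity forces $A_{\P_h}(t)$ to be odd, the leading coefficient is the volume $\tfrac{h}{6}$, and since the only lattice points of $\P_h$ are its four vertices one gets $a_1 = S - \tfrac{h}{6}$ with $S$ the vertex solid-angle sum, so the bound $S < \tfrac12$ settles $h \ge 3$. The only piece you leave open is the explicit check for $h=1,2$, which the paper closes by direct computation with the spherical-excess formula ($S(\P_1)\approx 0.127 < \tfrac16$, $S(\P_2)\approx 0.171 < \tfrac13$), exactly the fallback you describe.
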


In Section~\ref{vertexsection}, we answer an open question (in the negative) raised in
\cite[Chapter 11]{ccd}, namely, whether the solid-angle vertex sum
$\sum_{{\bf v} \text{ a vertex}} \omega_{\Delta}({\bf v})$ is
minimized when $\Delta$ is the regular $d$-simplex. We construct two
infinite families of simplices in dimensions $\ge 3$ that exhibit
extreme asymptotic behavior (approaching 0 and 1/2, respectively) with
respect to the sum of the solid angles at their vertices.

Although the existence of these constructions are special cases of
theorems from \cite{Barnette}, \cite{PerlesShepard1}, and
\cite{PerlesShepard2}, the explicit nature of the examples we give
here complement their existence proofs.

In Section \ref{nonnegativesection} we prove a nonnegativity result.
We define the generating function of $A_\P(t)$ as
\[
\Solid_{\P}(z) := \sum_{t \ge 0} A_{\P}(t) \, z^t .
\]
The fact that $A_\P(t)$ is a degree $d$ polynomial in $t$ is
equivalent to the fact that we can write $\Solid_\P(z)$ as a rational
function in $z$ with denominator $(1-z)^{ d+1 }$. When written this
way, Macdonald \cite{macdonald} proved that the numerator of
$\Solid_\P(z)$ is a palindromic polynomial or, equivalently, that
\begin{equation}\label{macreciprocity}
  A_\P(-t) = (-1)^d A_\P(t) \, .
\end{equation}
We will prove the following.

\begin{theorem}\label{nonnegativethm}
  Given a lattice polytope $\P \subset \R^d$, write
  \[
  \Solid_{\P}(z) = \frac{a_dz^d + a_{d-1}z^{d-1} + \cdots +
    a_0}{(1-z)^{d+1}} \, .
  \]
  Then $a_j > 0$ for $j = 1,2,\dots,d$ and $a_0 = 0$. 
\end{theorem}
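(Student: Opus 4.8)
Throughout, write $L_{F^\circ}(t)$ for the number of lattice points in the relative interior of $tF$, and $\omega_\P(F)$ for the solid angle of $\P$ along a face $F$ (the value of $\omega_\P$ at any point of $\operatorname{relint} F$, which is constant there). The plan is to decompose $A_\P$ over faces and then pass to the generating function. Since every lattice point of $t\P$ lies in the relative interior of exactly one face, for positive integers $t$ we have
\[
A_\P(t) = \sum_{\emptyset \ne F \subseteq \P} \omega_\P(F)\, L_{F^\circ}(t) \, ,
\]
and as both sides are polynomials this holds identically. I would then apply Ehrhart--Macdonald reciprocity to each face: writing the Ehrhart series of $F$ (of dimension $k$) with numerator $\sum_i h^*_{F,i} z^i$ and invoking Stanley's theorem that $h^*_{F,i} \ge 0$, reciprocity gives $\sum_{t \ge 1} L_{F^\circ}(t)\, z^t = \bigl(\sum_i h^*_{F,i}\, z^{k+1-i}\bigr)/(1-z)^{k+1}$. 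Clearing to the common denominator $(1-z)^{d+1}$ yields
\[
N(z) := \sum_j a_j z^j = \sum_{\emptyset \ne F \subseteq \P} \omega_\P(F)\,(1-z)^{d-\dim F} \sum_i h^*_{F,i}\, z^{\dim F + 1 - i} \, .
\]

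Two facts fall out immediately. The constant term is $a_0 = A_\P(0) = \sum_F (-1)^{\dim F} \omega_\P(F)$, which vanishes by the Gram--Euler relation for solid angles; this gives $a_0 = 0$. Macdonald's reciprocity \eqref{macreciprocity} translates into the palindromy $a_j = a_{d+1-j}$ of the numerator (now supported on $\{1,\dots,d\}$), so it suffices to treat $1 \le j \le \lceil d/2 \rceil$. For the lowest coefficient this is clean: only the constant term $1$ of each factor $(1-z)^{d-\dim F}$ can reach exponent $z^1$, so $a_1 = \sum_F \omega_\P(F)\, L_{F^\circ}(1)$, a sum of nonnegative terms in which the vertices already contribute $\sum_{\v} \omega_\P(\v) > 0$; hence $a_1 > 0$, and $a_d = a_1 > 0$ by palindromy.

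The genuine obstacle is the \emph{middle} coefficients $2 \le j \le d-1$. There the lower-order terms of $(1-z)^{d-\dim F}$ enter with alternating signs: a vertex contributes $(-1)^{j-1}\binom{d+1}{j-1}\omega_\P(\v)$, and an edge carrying interior lattice points also contributes negatively, so strict positivity is a real cancellation phenomenon rather than a termwise estimate. Already for $d=3$, where $A_\P(t) = \vol(\P)\,t^3 + c_1 t$, the theorem is equivalent to the two-sided bound $-\vol(\P) < c_1 < 2\,\vol(\P)$ on the linear coefficient (from $a_1 = \vol(\P)+c_1$ and $a_2 = 4\vol(\P)-2c_1$), a nontrivial inequality given that $c_1$ can be negative (Proposition~\ref{reeveprop}).

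To control these cancellations I would pursue one of two routes: either induct on $d$, expressing $N(z)$ through the numerators of the facets by exploiting that the boundary of $\P$ is a sphere and that the face angles obey localized Gram--Euler (angle analogues of Dehn--Sommerville) relations; or seek a manifestly nonnegative reinterpretation of $a_j$ as a solid-angle-weighted count of lattice points at height $j$ in a half-open fundamental domain of $\cone(\P \times \{1\}) \subset \R^{d+1}$. The step I expect to be hardest, and the technical heart of the problem, is that solid angles are \emph{not} preserved under coning: the tangent cone of $\cone(\P \times \{1\})$ at a slice point $(\x,t)$ is a unimodular shear of $T_{\x}\P \times \R$, and shears distort spherical volume, so the cone angle and the polytope angle disagree and the usual half-open decomposition cannot be transported verbatim. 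Surmounting this—either by a sign-controlled inductive bookkeeping of the $(1-z)^{d-\dim F}$ factors, or by showing the weighted half-open sum nonetheless reconstructs $N(z)$ despite the non-additivity of angles across a triangulation—is where the real work lies. Strictness of each inequality would then follow from $h^*_{\P,0} = 1$ together with $\vol(\P) > 0$.
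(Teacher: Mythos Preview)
Your proposal is incomplete: you establish $a_0=0$, $a_1>0$, and $a_d>0$ cleanly, but you explicitly leave the middle coefficients $2\le j\le d-1$ unproved, offering only two possible strategies. The second of these---interpreting $a_j$ as a solid-angle-weighted lattice-point count at height $j$ in a half-open fundamental domain of $\cone(\P\times\{1\})$---is exactly the paper's argument, and it goes through without the obstacle you anticipate.

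The difficulty you flag, that ``solid angles are not preserved under coning,'' is a red herring. The paper never computes $(d{+}1)$-dimensional solid angles in $\cone(\Delta)$. Instead, it \emph{defines} the weight of a point $(\x,t)\in\cone(\Delta)$ to be the $d$-dimensional solid angle of $\x$ in the horizontal slice $\cone(\Delta)\cap\{x_{d+1}=t\}=t\Delta$. With this definition there is no shear distortion to worry about: the weight at height $t$ is literally $\omega_{t\Delta}(\x)$, so the generating function over all lattice points in the cone, specialized to $(1,\dots,1,z)$, is $\Solid_\Delta(z)$ on the nose. This slice-wise weight is translation-invariant under any lattice vector in $\Z^{d+1}$, hence compatible with tiling $\cone(\Delta)$ by translates of the half-open fundamental parallelepiped $\Pi$, and one obtains $\Solid_\Delta(z)=g_\Pi(1,\dots,1,z)/(1-z)^{d+1}$. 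The coefficient of $z^k$ is then the solid-angle sum of the height-$k$ slice of $\overline{\Pi}$, a $d$-dimensional polytope whose vertices $\sum_{i\in S}(\v_i,1)$, $|S|=k$, are lattice points; hence it is strictly positive for $1\le k\le d$ and zero for $k=0$.

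Your other worry, ``non-additivity of angles across a triangulation,'' is also unfounded. Solid angles are defined via volumes and are therefore additive over a triangulation of $\P$ into lattice $d$-simplices: $A_\P=\sum_i A_{\Delta_i}$, so $\Solid_\P=\sum_i\Solid_{\Delta_i}$ and the numerator coefficients add. The paper uses exactly this reduction in its first sentence. Once you accept these two points---slice angles, not cone angles; and additivity over a triangulation---your second route becomes a complete proof.
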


If we replace $A_\P(t)$ with $L_\P(t)$, there is a well-known result
due to Stanley \cite{stanleynonnegativity} that $a_j \ge 0$ for
$j=0,1,\dots,d$. 

In an earlier version of this article, we stated two theorems (Theorem 3 and Theorem 4) which Katharina Jochemko and Raman Sanyal have pointed out are not correct as stated. See Section~\ref{sec:valuation} for further discussion, but we explain the setup here for consistency with the previous version of the article.

Let $M$ denote the set of measurable sets in $\R^d$. A {\it valuation} is a function $\nu \colon
M \times \R^d \to \R$ that satisfies inclusion-exclusion:
\[
\nu(K_1 \cup K_2, \x) = \nu(K_1, \x) + \nu(K_2, \x) - \nu(K_1 \cap K_2, \x) \, .
\]
For our purposes, we will replace $M$ by the set of all polyhedral
complexes. A valuation whose codomain is $\R_{ \ge 0 }$ is called
\emph{nonnegative}. A valuation $\nu$ is \emph{translation-invariant}
if $\nu(K+\y,\x+\y) = \nu(K,\x)$ for all $K \in M$, and all $\x, \y
\in \R^d$. See \cite{geometricprob} for more about valuations.

It is clear that $\nu(K,\x) = \omega_K(\x)$ is a
(translation-invariant nonnegative) valuation.

Let $N_\P \colon \Z_{ >0 } \to \R$ be defined through
\begin{equation}\label{Ldef}
N_{\P}(t) := \sum_{\x \in \Z^d} \nu(t\P,\x)
\end{equation}
(in fact, we could replace $\Z^d$ here by an arbitrary lattice),
and let $G_\P(z)$ be the generating function of $N_\P(t)$:
\begin{equation}\label{Gdef}
G_\P(z) := \sum_{ t \ge 0 } N_\P(t) \, z^t .
\end{equation}
McMullen \cite{mcmullenreciprocity} proved that $N_\P(t)$ is a
polynomial if $\P$ is a lattice polytope and, equivalently, that we
can write $G_\P(z)$ as a rational function in $z$ with denominator
$(1-z)^{ d+1 }$. Recall that our polytopes are assumed to be
full-dimensional.

\setcounter{theorem}{4}

In Section~\ref{periodsection}, we discuss a phenomenon that can be
observed with certain \emph{rational polytopes}, i.e., polytopes whose
vertices are in $\Q^d$. In this case, the functions $A_\P(t)$ and
$L_\P(t)$ are examples of a \emph{quasipolynomial}, that is, a
function of the form $c_d(t) \, t^d + c_{ d-1 } (t) \, t^{ d-1 } +
\dots + c_0(t)$, where $c_0(t), \dots, c_d(t)$ are periodic functions
in $t$. The least common multiple of the denominators of the vertex
coordinates of $\P$ is always a period of the coefficient functions of
$A_\P(t)$ and $L_\P(t)$. The recent literature
\cite{deloeramcallister, mcallisterwoods} includes examples of
rational polytopes whose Ehrhart quasipolynomials exhibit \emph{period
  collapse}; that is, they are polynomials. We give a family of
polytopes for which period collapse happens for the solid-angle
quasipolynomials.


\section{Some background.} \label{backgroundsection}

We give a brief review of Ehrhart theory and the theory of solid
angles without any proofs. The interested reader can find proofs and
much more in \cite{ccd}. See also \cite{sam} for proofs with a more
valuation oriented mindset.

As mentioned in the introduction, for a given rational polytope $\P
\subset \R^d$ (for this paragraph, we do not require that $\P$ is
$d$-dimensional), the counting function $L_{\P}(t) := \# \left( t\P
  \cap \Z^d \right)$ is a quasipolynomial in the integer variable $t$.
If $\P$ has integral vertices, then $L_{\P}(t)$ is a polynomial.
Denote the interior of $\P$ as $\P^\circ$ (here we mean topological
interior relative to the affine span of $\P$); the following
\emph{reciprocity law} holds:
\begin{align} \label{reciprocity} 
L_{\P}(-t) = (-1)^{\dim \P} L_{\P^\circ}(t)\,. 
\end{align} 
We will need to know a few properties of Ehrhart polynomials. Namely,
the degree of the polynomial is the dimension of the polytope, and the
leading coefficient is its volume. (We always measure volume of a
polytope relative to its affine span, normalized with respect to the
lattice induced on this affine span.) The second leading coefficient
is half of the sum of the volumes of the facets (the codimension-1
faces). In particular, these two coefficients are always positive.

Now we can explain why $A_\P(t)$ is also a quasipolynomial. For a
face $\F \subseteq \P$, define the solid angle of $\F$ to be the
solid angle of any point in $\F^\circ$, denoted by
$\omega_{\P}(\F)$. Thus
\begin{align} \label{polysum} 
  A_{\P}(t) = \sum_{\F \subseteq \P} \omega_{\P}(\F) L_{\F^\circ}(t)\,,
\end{align}
where the sum is over all faces $\F$ of $\P$ (see also \cite{mcmullenvaluations} for more on the relationship between $A_\P(t)$ and $L_\P(t)$). 
So $A_\P(t)$ is indeed a
quasipolynomial, since $L_{\F^\circ}(t)$ is a quasipolynomial for all
faces $\F$. In particular, if $\P$ is a lattice polytope, then
$A_\P(t)$ is a polynomial in $t$.
 
By \eqref{macreciprocity}, we have $A_\P(0) = 0$. An important
relation which for rational polytopes is equivalent to this fact is
the following.

\begin{theorem}[Brianchon--Gram relation] If $\P$ is a polytope, then
  \[  
  \sum_{\F \subseteq \P} (-1)^{\dim \F} \omega_{\P}(\F) = 0\,,
  \]
  where the sum is over all faces $\F$ of $\P$.
\end{theorem}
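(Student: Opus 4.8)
The plan is to derive this angle relation from its indicator-function counterpart, the classical Brianchon--Gram identity for indicator functions. Writing $[\,K\,]$ for the indicator function of a set $K$, that identity states that for a convex polytope $\P$ one has the pointwise equality
\[
[\,\P\,] = \sum_{\emptyset\neq\F\subseteq\P}(-1)^{\dim\F}\,[\,T_\F\,],
\]
where $T_\F$ is the tangent cone of $\P$ along the face $\F$: fixing $\x_0\in\F^\circ$, set $T_\F=\x_0+\widehat T_\F$ with $\widehat T_\F=\{\mathbf u\in\R^d : \x_0+\eps\mathbf u\in\P \text{ for all sufficiently small }\eps>0\}$. I would take this identity as the geometric input; it is a standard inclusion--exclusion over the face structure, and it is easily checked in small cases (for a segment the three terms read $[\,[0,\infty)\,]+[\,(-\infty,1]\,]-[\,\R\,]=[\,[0,1]\,]$).

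The second ingredient is that the solid angle is recovered as a density at infinity. Define $\delta(K):=\lim_{R\to\infty}\vol(K\cap B(R,\mathbf 0))/\vol(B(R,\mathbf 0))$. I would record two facts. First, $\delta(\P)=0$, since $\vol(\P\cap B(R,\mathbf 0))\le\vol(\P)$ stays bounded while the denominator diverges. Second, $\delta(T_\F)=\omega_\P(\F)$: because $\widehat T_\F$ is a cone with apex $\mathbf 0$, the ratio $\vol(\widehat T_\F\cap B(R,\mathbf 0))/\vol(B(R,\mathbf 0))$ is independent of $R$ by homogeneity and equals the normalized spherical measure $\omega_\P(\F)$, while translating the apex from $\mathbf 0$ to $\x_0$ does not affect the limit. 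Extending $\delta$ to a linear functional on the span of the relevant indicators and applying it to both sides of the Brianchon--Gram identity---which is legitimate since the right-hand sum is finite---gives $0=\sum_\F(-1)^{\dim\F}\omega_\P(\F)$, as desired.

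The step I expect to be most delicate is exactly the identity $\delta(T_\F)=\omega_\P(\F)$ when $\dim\F\ge 1$: then $\widehat T_\F$ is not pointed but contains the lineality space $\operatorname{lin}(\F-\x_0)$, and one must confirm that the normalized ball-volume limit of such a cone still equals its solid-angle (spherical) measure and is unchanged under translating the apex off the origin. Establishing the underlying indicator identity in full generality is the other substantive ingredient, though it is classical. Finally, for lattice polytopes the relation can alternatively be seen as equivalent to $A_\P(0)=0$: expanding via \eqref{polysum} and applying Ehrhart reciprocity \eqref{reciprocity} facewise gives $L_{\F^\circ}(0)=(-1)^{\dim\F}$, whence $A_\P(0)=\sum_\F(-1)^{\dim\F}\omega_\P(\F)$. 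This reformulation is appealing in the rational case but still needs the vanishing of the constant term as separate input, so I would keep the density-at-infinity argument as the main proof, since it handles all real polytopes uniformly.
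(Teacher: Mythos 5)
Your argument is correct. Note that the paper itself offers no proof of this statement: it appears in the background section, which is explicitly given ``without any proofs,'' with the reader referred to \cite{ccd}; the paper only remarks that for rational polytopes the relation is equivalent to $A_\P(0)=0$. Your route --- applying the density-at-infinity functional $\delta(K)=\lim_{R\to\infty}\vol(K\cap B(R,\mathbf 0))/\vol(B(R,\mathbf 0))$ to the indicator-function Brianchon--Gram identity $[\,\P\,]=\sum_{\F}(-1)^{\dim\F}[\,T_\F\,]$ --- is the standard way to deduce the angle relation and all the steps you flag do go through: $\delta$ is linear on finite sums of indicators whose individual densities exist (the integral over $B(R,\mathbf 0)$ depends only on the pointwise function, and the sum is finite); $\delta$ is translation-invariant because $B(R-\|\x_0\|,\mathbf 0)\subseteq B(R,-\x_0)\subseteq B(R+\|\x_0\|,\mathbf 0)$ and the volume ratios tend to $1$; and for a (possibly non-pointed) cone $C$ with apex at the origin, positive homogeneity makes $\vol(C\cap B(R,\mathbf 0))/\vol(B(R,\mathbf 0))$ independent of $R$, which simultaneously identifies $\delta(T_\F)$ with the small-$r$ limit defining $\omega_\P(\F)$ at a point of $\F^\circ$ (where $\P$ and $T_\F$ agree locally). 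The only genuinely substantive input is the indicator identity itself, which you correctly isolate as classical; your approach has the merit of working uniformly for all real polytopes, whereas the $A_\P(0)=0$ reformulation you mention at the end requires rationality and, as you observe, independent knowledge of the constant term, so you are right to keep it as a remark rather than the proof.
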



\section{Formulas for the explicit computation of solid angles.} \label{formulasection}

In dimension 3, the following explicit formula can be used for
calculating solid angles.

\begin{proposition} \label{3dformula}
  Given a simplicial cone $K \subset \R^3$ at the origin, generated by the linearly independent vectors $v_1,
  v_2, v_3$, the solid angle $\omega_K$ at the origin is given by:
  \begin{align*}
    (4\pi) \omega_K &= \cos^{-1}\left( \frac{ (v_1\times v_2)\cdot (v_1
        \times v_3) }{\| v_1\times v_2 \| \| v_1 \times v_3 \|
      }\right) + \cos^{-1}\left( \frac{ (v_2\times v_1)\cdot (v_2
        \times v_3) }{\| v_2\times v_1 \| \| v_2 \times v_3 \| }
    \right) \\
    &\quad + \cos^{-1}\left( \frac{ (v_3\times v_1)\cdot (v_3 \times
        v_2) }{\| v_3\times v_1 \| \| v_3 \times v_2 \| }\right) -
    \pi,
  \end{align*}
  where $\times$ denotes the cross product of 3-dimensional vectors,
  $\cdot$ denotes the dot product, and $\| \|$ is the usual Euclidean norm.
\end{proposition}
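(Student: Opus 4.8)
The plan is to reduce the statement to a purely spherical-geometric computation and then match it term by term with the claimed formula. First I would intersect the cone $K$ with the unit sphere $S^2$, obtaining a spherical triangle $T$ whose vertices are the normalized generators $u_i := v_i/\|v_i\|$. Since $K$ is a cone, the ratio $\vol(B(r,\mathbf{0}) \cap K)/\vol(B(r,\mathbf{0}))$ defining $\omega_K$ is independent of $r$; a direct computation of the volume of the spherical sector shows it equals the spherical area of $T$ divided by the total area $4\pi$ of $S^2$. Hence $\omega_K = \frac{1}{4\pi}\,\mathrm{area}(T)$, so that $(4\pi)\,\omega_K = \mathrm{area}(T)$.

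Next I would invoke Girard's spherical excess formula: if $T$ has interior angles $\alpha,\beta,\gamma$ at $u_1,u_2,u_3$ respectively, then $\mathrm{area}(T) = \alpha + \beta + \gamma - \pi$. Combining this with the previous step gives $(4\pi)\,\omega_K = \alpha + \beta + \gamma - \pi$, which already has the exact shape of the asserted identity. It then remains only to identify $\alpha$ with the first inverse cosine, and $\beta,\gamma$ with the other two.

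The heart of the argument, and the step I expect to be most delicate, is showing that the interior angle $\alpha$ at $u_1$ equals the angle between the cross products $v_1 \times v_2$ and $v_1 \times v_3$. By definition $\alpha$ is the angle between the tangent directions to the two bounding great-circle arcs at $u_1$; these are the components of $v_2$ and $v_3$ orthogonal to $v_1$, namely $t_2 := v_2 - \frac{v_1\cdot v_2}{\|v_1\|^2}\,v_1$ and $t_3$ defined analogously. A short vector-triple-product computation gives $(v_1\times v_2)\times v_1 = \|v_1\|^2\, t_2$ and $(v_1\times v_3)\times v_1 = \|v_1\|^2\, t_3$, both positive multiples. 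Because $v_1\times v_2$ and $v_1\times v_3$ lie in the plane orthogonal to $v_1$, the map $x \mapsto x\times v_1$ restricts to that plane as a rotation by ninety degrees composed with scaling by the positive scalar $\|v_1\|$, and therefore preserves angles. Consequently the angle between $v_1\times v_2$ and $v_1\times v_3$ equals the angle between $t_2$ and $t_3$, which is exactly $\alpha$.

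The subtle point worth extra care is orientation: one must check that this produces the interior angle rather than its supplement. This works out precisely because both cross products share the factor $v_1$, so under the common map $x\mapsto x\times v_1$ they carry to the genuinely inward-pointing tangents $t_2$ and $t_3$ (with positive coefficients), and $\cos^{-1}$ returns the correct value in $[0,\pi]$. Applying the identical argument at $u_2$ (using the face normals $v_2\times v_1$ and $v_2\times v_3$) and at $u_3$ yields $\beta$ and $\gamma$ as the second and third inverse-cosine terms, completing the proof.
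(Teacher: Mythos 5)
Your proof is correct and follows essentially the same route as the paper's: reduce $\omega_K$ to the area of a spherical triangle, apply Girard's theorem, and identify each spherical angle with the angle between the corresponding pair of cross products (the face normals). Your careful verification via the map $x \mapsto x \times v_1$ that the formula yields the interior angle rather than its supplement is a welcome elaboration of a step the paper states without detail, but it is not a different argument.
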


\begin{proof} First note that computing $\omega_K$ is equivalent to
  taking a sphere of radius 1 at the origin and calculating the
  surface area of its intersection with $K$ divided by the surface
  area of the sphere, which is $4\pi$. The surface area of a spherical
  triangle is, as a consequence of Girard's theorem \cite[\S
  6.9]{coxeter}, the sum of its spherical angles minus $\pi$. The
  spherical angle at $v_i$ is precisely the dihedral angle $\theta_i$
  between the two faces of $K$ that intersect at $v_i$, and
  $\cos(\theta_i)$ is equal to the dot product of the normal vectors
  to the planes spanned by the faces, whence the formula.
\end{proof}

\begin{proof}[Proof of Proposition \ref{reeveprop}] 
  Given a simplex $\Delta$, let
  \[
  S(\Delta) := \sum_{{\bf v} \text{ a vertex}} \omega_{\Delta}({\bf
    v}) \, .
  \]
  Let $S = S(\Delta_h)$. By \eqref{macreciprocity}, the solid-angle
  polynomial of $\Delta_h$ is
  \[
  A_{\Delta_h}(t) = \frac{h}{6} t^3 + \left( S - \frac{h}{6} \right)
  t.
  \]
  Since $S < \frac{1}{2}$ by Proposition~\ref{upperbound} below, we
  conclude that $S - \frac{h}{6} < 0$ if $h \ge 3$. A direct
  calculation using Proposition~\ref{3dformula} shows that
  $S(\Delta_1) \approx 0.127 < \frac{1}{6}$ and $S(\Delta_2) \approx
  0.171 < \frac{1}{3}$.
\end{proof}

To handle the situation in any dimension $d$ we now describe a
formula, discovered by Aomoto \cite{solidangle} in 1977, that allows
us to use an infinite hypergeometric series to compute $\omega_K$ for
a simplicial $d$-dimensional cone $K$.  We follow Kenzi Sato's
exposition \cite{Sato}, as it clarifies Aomoto's fundamental work a
bit further.

We begin with the hyperplane description of a spherical simplex in
$\R^d$, defined by
\[
\Delta = \{ x \in \mathrm{S}^{d-1} \mid \langle n_i, x \rangle \geq 0,\ i = 0,
\dots, d -1 \},
\]
where the $n_i$ are linearly independent integer vectors, normal to
the facets of $\Delta$ (they are inward-pointing normal vectors).  We
define $\theta_{i,j}$ to be the dihedral angle between the two facets
whose normal vectors are $n_i$ and $n_j$.  Thus, we have
\[
\cos( \theta_{i,j}) = - \langle  n_i, n_j  \rangle.
\]

The solid angle $\omega_{\Delta}$, i.e. the volume of the spherical
simplex $\Delta$, is determined by the $d \choose 2$ dihedral angles
$\theta_{0,1}, \theta_{0,2}, \dots, \theta_{d-2, d-1}$.  Here is
Aomoto's hypergeometric series:
\[
\omega_\Delta = C \sum_{ \m \in \Z_{>0}^{ d(d-1)/2 } } \frac{\prod_{i
    < j} (-2 b_{i,j})^{m_{i,j}} }{ \prod_{i < j} m_{i,j} !  }
\prod_{k=0}^{d-1} \Gamma \left( \frac{1}{2}(m_{0,k} + \dots + m_{k-1,
    k} + m_{k, k+1} + \dots + m_{k, d-1}) + \frac{1}{2} \right),
\]
where the sum is extended over all integer vectors of the form $\m =
(m_{0,1}, m_{0,2}, \dots, m_{d-2, d-1}) \in \Z_{>0}^{d(d-1)/2}$, where
$\Gamma$ denotes the Euler gamma function, where 
$C = \sqrt{\det B}/\pi^{d/2}$, and where the matrix $B:= (b_{i,j})$ is
a Gram-like matrix defined as follows.  First, let
\[
G = \Big( \langle n_i, n_j \rangle \Big),
\]
a $d \times d$ Gram matrix.  Next, let 
\[
G_k = G \text { except with its $k$th row and $k$th column deleted,}
\]
so that $G_k$ is a $(d-1) \times (d-1)$ matrix. Next, let
\[
K_{i,j} = \delta(i,j) \frac{   \det G_i}{   \det G},
\]
a diagonal matrix, by definition of the Kronecker delta function
$\delta(i,j)$.  Finally, let
\[
B = (b_{i,j}) = K^{-1} G^{-1} K^{-1}.
\]

In a recent paper, Ribando \cite{ribando} rediscovered Aomoto's
results, but gave different proofs, so that his paper has the
redeeming feature of having a somewhat simplified proof of a simpler
version of Aomoto's results.


\section{Solid angles at the vertices of a polytope} \label{vertexsection}

As before, given any convex polytope $\P$, let $S(\P)$ denote the sum
of the solid angles at the vertices of $\P$. Our goal in this section
is to study the extremal behavior of $S(\P)$, and especially in the
case that $\P$ is a simplex.  In passing, we note that a conjecture of
\cite[Chapter 12]{ccd} that the regular simplex minimizes $S(\Delta)$
is false, but that similar questions on minimizing or maximizing
$S(\P)$ are still quite interesting.  In particular, \cite{Barnette}
David Barnette has given an amusing and beautiful equivalence for the
minimization of $S(\P)$ in terms of the existence of a Hamiltonian
circuit along the edge graph of $\P$.  There are also two papers by
Perles and Shephard (\cite{PerlesShepard1} and \cite{PerlesShepard2})
with very general results along these lines as well. The combinatorial
type of a polytope shall refer to isomorphism type of its face
lattice.

\begin{theorem}[Barnette] \label{MinimumSolidAngleSum} Any
  $3$-dimensional polytope $\P$ has a Hamiltonian circuit along its
  edge-graph if and only if there are polytopes with the same
  combinatorial type as $\P$, with arbitrarily small vertex angle
  sums.
 \end{theorem}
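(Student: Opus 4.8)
The plan is to reduce this geometric equivalence to a purely combinatorial statement by means of an exact formula for $S(\P)$ in terms of the dihedral angles of $\P$. For a vertex $v$ at which $\deg(v)$ edges meet, the intersection of $\P$ with a small sphere centered at $v$ is a spherical $\deg(v)$-gon whose interior angles are precisely the dihedral angles $\theta_e$ of $\P$ along the edges $e \ni v$; by Girard's theorem, exactly as in the proof of Proposition~\ref{3dformula}, its normalized area is
\[
\omega_\P(v) = \frac{1}{4\pi}\left(\sum_{e \ni v}\theta_e - (\deg(v)-2)\pi\right).
\]
Summing over all vertices, using that each edge has two endpoints and the handshake identity $\sum_v \deg(v) = 2E$, and then invoking Euler's formula $V - E + F = 2$ (valid since the boundary is a sphere), I would obtain the clean identity
\[
S(\P) = \frac{V}{2} - \frac{1}{2\pi}\sum_{e}\bigl(\pi - \theta_e\bigr),
\]
where the sum runs over the $E$ edges and each exterior dihedral angle $\pi - \theta_e$ lies in $(0,\pi)$. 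Making $S(\P)$ small is thus equivalent to pushing $\sum_e(\pi-\theta_e)$ toward its combinatorial ceiling $\pi V$.

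For the forward direction I would realize a given Hamiltonian circuit $H$ as the silhouette of a family of very flat polytopes of the prescribed combinatorial type. In the (essentially unique, by $3$-connectivity) planar embedding of the graph of $\P$, the Jordan curve $H$ splits the remaining edges into those lying inside and those lying outside $H$, and each family is noncrossing; this is the two-page book embedding afforded by a Hamiltonian cycle in a planar graph, and the inside/outside regions are exactly the faces of $\P$. Placing the vertices of $H$ on a convex polygon $Q$, I would lift the inside chords slightly upward and the outside chords slightly downward, using coherent (convex, resp.\ concave) heights, to produce for each small thickness $\eps>0$ a convex polytope $\P_\eps$ combinatorially equivalent to $\P$ whose silhouette is exactly $H$. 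As $\eps \to 0$ both sheets flatten onto $Q$, so the dihedral angles along $H$ tend to $0$ while all interior dihedral angles tend to $\pi$; by the identity, $\sum_e(\pi-\theta_e) \to \pi\cdot\#H = \pi V$ and hence $S(\P_\eps) \to 0$.

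For the reverse direction I would argue by compactness. Given combinatorially equivalent polytopes $\P^{(n)}$ with $S(\P^{(n)}) \to 0$, I normalize to unit diameter and pass to a subsequence along which every $\theta_e$ converges in $[0,\pi]$ and $\P^{(n)} \to K$ in the Hausdorff sense. If $K$ were full-dimensional, the dihedral angles would converge to those of a genuine $3$-polytope, giving $S(\P^{(n)}) \to S(K) > 0$, a contradiction; so $\dim K \le 2$, and $K$ is a convex polygon onto which the $\P^{(n)}$ flatten as a doubly covered sheet. The edges then split into silhouette edges separating the two sheets, with $\theta_e \to 0$, and interior edges within a sheet, with $\theta_e \to \pi$. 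At any vertex interior to a sheet every incident $\theta_e \to \pi$, so $\omega_\P(v) \to \tfrac12$; since all solid angles are nonnegative and $S \to 0$, no such vertex can exist, i.e.\ every vertex lies on the silhouette. The silhouette of a flat doubly covered convex polygon is its boundary, a single simple cycle through all its corners, which therefore visits all $V$ vertices and is a Hamiltonian circuit.

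The main obstacle is this reverse direction, and specifically controlling the degeneration: one must rule out (or renormalize away) collapse of the $\P^{(n)}$ to a segment or point, guarantee that the limiting flat region is genuinely two-dimensional with the sharp edges tracing its boundary, and confirm that the flattening really does organize the faces into a top and a bottom sheet rather than producing some more intricate folding pattern. Establishing that the silhouette stays connected and simple uniformly along the sequence is the technical heart of the argument; once the flat two-sheet limit is secured, the per-vertex angle identity handles the remaining bookkeeping essentially for free.
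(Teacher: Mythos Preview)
The paper does not prove this theorem. It is stated as Barnette's result with a citation to \cite{Barnette} and no argument is given; the authors invoke it only to motivate the explicit constructions in Section~\ref{vertexsection}. So there is no proof in the paper against which to compare your proposal.

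That said, your outline is a reasonable reconstruction of the classical argument. The identity
\[
S(\P)=\frac{V}{2}-\frac{1}{2\pi}\sum_{e}(\pi-\theta_e)
\]
is derived correctly, and the forward direction---realizing a Hamiltonian cycle as the silhouette of a nearly flat polytope via a two-page book embedding with coherent convex/concave lifts---is the right picture; the only work you are suppressing is the verification that the lifted object is genuinely convex with the prescribed face lattice (a Steinitz-type check), which is routine but not automatic.

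For the reverse direction you have correctly identified the real difficulty. The step ``$K$ full-dimensional implies $S(\P^{(n)})\to S(K)>0$'' is not literally true as stated, since Hausdorff convergence of polytopes need not preserve combinatorial type or even the number of vertices, and $S$ is not defined for a general convex body. What you actually need is weaker: a uniform positive lower bound on $S$ over polytopes of bounded diameter containing a fixed ball, which your formula supplies once you observe that every exterior dihedral angle is bounded away from $\pi$ under such a fatness assumption. More seriously, the possibilities that $K$ degenerates to a segment, that several vertices of $\P^{(n)}$ coalesce in the limit, or that the ``sharp'' edges fail to organize into a single simple closed curve all require genuine arguments; your closing paragraph is honest about this, but as written the reverse implication is a plan rather than a proof.
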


 Since it is obvious that a simplex has a Hamiltonian circuit along
 its edge graph, there are always simplices that have arbitrarily
 small vertex angle sums.  It is also worth noting that
 \cite{Barnette} has a general upper bound for $S(\P)$ in any
 dimension.

 These results about $S(\P)$ are mostly existential, so we complement
 them with some constructive examples below.  We now compute some
 explicit examples of solid-angle polynomials that we shall need
 later.

\begin{proposition} \label{permutationsimplex} Let $\pi \in
  \mathfrak{S}_d$ be a permutation on $d$ elements, and let $\e_1,
  \dots, \e_d$ be the standard basis vectors. The polytope $\Delta_\pi
  = \conv\{\e_{\pi(1)}, \e_{\pi(1)} + \e_{\pi(2)}, \dots, \e_{\pi(1)} +
  \cdots + \e_{\pi(d)}\}$ has solid angle polynomial $\frac{1}{d!}t^d$.
\end{proposition}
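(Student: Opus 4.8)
The plan is to realize each $\Delta_\pi$ as one cell of the standard triangulation of the unit cube and then to exploit the symmetry among the cells. (For the claim to be dimensionally consistent, $\Delta_\pi$ must be the full-dimensional simplex obtained by adjoining the origin $\mathbf 0$ to the listed vertices; this is the unimodular simplex I work with below.) First I would rewrite a point $\sum_k \lambda_k(\e_{\pi(1)}+\cdots+\e_{\pi(k)})$ of $\Delta_\pi$ in barycentric coordinates and collect the coefficient of each $\e_{\pi(j)}$; the partial sums $\mu_j=\lambda_j+\cdots+\lambda_d$ satisfy $1\ge\mu_1\ge\cdots\ge\mu_d\ge0$, which shows that $\Delta_\pi=\{\x\in[0,1]^d : x_{\pi(1)}\ge x_{\pi(2)}\ge\cdots\ge x_{\pi(d)}\}$. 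As $\pi$ ranges over $\mathfrak{S}_d$ these ``order simplices'' have pairwise disjoint interiors and cover $[0,1]^d$, so $\{\Delta_\pi\}_{\pi\in\mathfrak{S}_d}$ triangulates the unit cube.

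Next I would record that all $d!$ of the polynomials $A_{\Delta_\pi}(t)$ coincide. The coordinate permutation $\sigma\colon\e_i\mapsto\e_{\sigma(i)}$ is an orthogonal lattice automorphism of $\R^d$ sending $\Delta_\pi$ to $\Delta_{\sigma\pi}$; since solid angles are defined through volumes of balls they are preserved by any isometry, and the lattice sum defining $A_\P(t)$ is preserved by any lattice automorphism. Hence $A_{\Delta_\pi}(t)=A_{\Delta_{\sigma\pi}}(t)$ for all $\sigma$, and as the action on indices is transitive, the $A_{\Delta_\pi}(t)$ are all equal.

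Then I would add them up. Because $\omega_K(\x)$ is a valuation and, for each fixed $\x$, the tangent cones at $\x$ cut out by the pieces $t\Delta_\pi$ subdivide the tangent cone at $\x$ cut out by the dilated cube $[0,t]^d$, the solid angles add across the triangulation: $\omega_{[0,t]^d}(\x)=\sum_\pi\omega_{t\Delta_\pi}(\x)$ for every $\x\in\Z^d$. Concretely one inserts the simplices one at a time and applies inclusion--exclusion; each correction term is the solid angle of a lower-dimensional polytope and therefore vanishes in the $d$-dimensional sense. Summing over $\x$ gives $A_{[0,1]^d}(t)=\sum_\pi A_{\Delta_\pi}(t)=d!\,A_{\Delta_{\mathrm{id}}}(t)$ by the previous paragraph.

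Finally I would compute the cube directly: it is a product, and the solid angle factorizes as $\omega_{[0,t]^d}(\x)=\prod_{i=1}^d\omega_{[0,t]}(x_i)$, where the one-dimensional factor is $\tfrac12$ at the endpoints $\{0,t\}$, equals $1$ at the interior lattice points $\{1,\dots,t-1\}$, and is $0$ otherwise. Hence $A_{[0,1]^d}(t)=\big(\tfrac12+(t-1)+\tfrac12\big)^d=t^d$, and dividing by $d!$ yields $A_{\Delta_\pi}(t)=\tfrac{1}{d!}t^d$. The only step needing genuine care is the additivity of solid angles over the triangulation, i.e.\ checking that the inclusion--exclusion corrections along the shared faces contribute nothing; everything else is routine bookkeeping.
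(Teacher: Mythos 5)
Your proof is correct and follows essentially the same route as the paper: triangulate $[0,1]^d$ by the order simplices $\Delta_\pi$, use the coordinate-permutation symmetry to equate all the $A_{\Delta_\pi}$, establish $A_{[0,1]^d}(t)=t^d$, and divide by $d!$ (the paper gets $t^d$ by tiling $[0,t]^d$ with unit cubes rather than by your product formula, but that is a cosmetic difference). You are also right that the vertex $\mathbf{0}$ must be adjoined to the listed vertices for $\Delta_\pi$ to be full-dimensional, as the paper's own later use of the proposition confirms.
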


\begin{proof} The set $\{ \Delta_\pi \mid \pi \in \mathfrak{S}_d \}$
  is a triangulation of the unit cube $[0,1]^d$, and the $\Delta_\pi$
  are all congruent to one another, i.e., any such simplex can be
  obtained from any other through a series of rotations, reflections,
  and translations. Note that $[0,t]^d$ is tiled by $t^d$ copies of
  $[0,1]^d$, so $A_{[0,1]^d}(t) = t^d$ since $A_{[0,1]^d}(1) = 1$.
  Hence $A_{\Delta_\pi}(t) = \frac{1}{d!}t^d$.
\end{proof}

Now let $\Delta = \conv\{(0,0,0), (0,1,1), (1,0,1), (1,1,0)\}$, which
is a regular tetrahedron. The solid angle at an edge is the dihedral
angle $\frac{1}{2\pi} \cos^{-1}(\frac{1}{3})$, and they are all the
same by symmetry, so Brianchon--Gram gives
\[
0 = -1 + 4 \cdot \frac{1}{2} -
\frac{3}{\pi}\cos^{-1}\left(\frac{1}{3}\right) + 4\omega\,,
\]
where $\omega$ is the solid angle at a vertex. Thus $S(\Delta) \approx
0.175$. But by Proposition \ref{permutationsimplex}, the solid-angle
polynomial of $\mathcal{Q} = \conv\{(0,0,0), (0,0,1), (0,1,1),
(1,1,1)\}$ is $\frac{1}{6}t^3$, so $S(\mathcal{Q}) = \frac{1}{6}$,
which is less than 0.175. Also, the sum of the solid angles at the
vertices of the standard simplex $\conv\{(0,0,0), (1,0,0), (0,1,0),
(0,0,1)\}$ is approximately 0.206, so $S(\Delta)$ is neither a maximum
nor a minimum. Using Proposition~\ref{solidanglelimits}, one observes
the same behavior in higher dimensions.

Despite our negative answer for the original conjecture, we can
rephrase it as follows: In fixed dimension, which simplices
minimize/maximize $S(\Delta)$? We note that a similar question of
angle sums is addressed in \cite{anglesums}, but here we are concerned
with integral polytopes.

\begin{proposition} \label{solidanglelimits} For $d>2$, let $\e_1, \e_2,
  \dots, \e_d$ be the standard basis vectors of $\R^d$, and define
  $\Delta(h_1, \dots, h_{d-1}) = \conv \{{\bf 0}, \e_1, \e_2, \dots,
  \e_{d-1}, (h_1, h_2, \dots, h_{d-1}, 1)\}$. Then 
  \begin{enumerate}[\rm (a)]
  \item \label{negativecase} $S(\Delta(h_1, \dots, h_{d-1}))$ is
    arbitrarily close to $\frac{1}{2}$ for sufficiently large negative
    values of all of the $h_i$'s, and
  \item \label{positivecase} $S(\Delta(h, h, 1, 1, \dots, 1)) \to 0$
    as $h \to +\infty$.
  \end{enumerate}
\end{proposition}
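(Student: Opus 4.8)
The plan is to write $S(\Delta)$, for $\Delta = \Delta(h_1,\dots,h_{d-1})$, as a sum of $d+1$ terms, one for each vertex $w$, where the solid angle at $w$ is the solid angle $\omega_{K_w}$ of the tangent cone $K_w = \cone\{w' - w : w' \neq w \text{ a vertex of } \Delta\}$. The base vertices $\mathbf 0, \e_1, \dots, \e_{d-1}$ lie in the hyperplane $\{x_d = 0\}$ and are fixed, while the apex $\v = (h_1,\dots,h_{d-1},1)$ recedes to infinity as the $h_i$ grow. The structural feature I would exploit is that from each base vertex exactly one edge, the edge to $\v$, leaves $\{x_d=0\}$ (it has $x_d$-coordinate $1$), whereas the other $d-1$ edges stay inside it; and that the $d$ edges emanating from $\v$ all point in nearly the same direction once the $h_i$ are large.

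First I would record three elementary facts about the solid angle $\omega_K$ of a simplicial cone $K = \cone\{u_1,\dots,u_d\}\subset\R^d$. (i) If every $u_i$ makes an angle at most $\theta$ with a fixed unit vector $e$, then $\langle \sum c_i u_i, e\rangle \ge (\cos\theta)\sum c_i\|u_i\| \ge (\cos\theta)\|\sum c_i u_i\|$, so all of $K$ lies within a spherical cap of radius $\theta$ about $e$ and $\omega_K \to 0$ as $\theta \to 0$. (ii) If there is a coordinate index $k$ and a fixed sign with $\pm(u_i)_k \ge 0$ for all $i$, while the unique generator $u_j$ with $(u_j)_d = 1$ satisfies $|(u_j)_k| \ge M$, then every $v = \sum c_i u_i \in K$ has $0 \le v_d = c_j \le |v_k|/M \le 1/M$ on the unit sphere, so $K$ is confined to the equatorial band $\{|x_d|\le 1/M\}$, whose spherical measure tends to $0$; hence $\omega_K \to 0$ as $M \to \infty$. (iii) If a family of cones $K$ all satisfy $K \subseteq \{x_d \ge 0\}$ and eventually contain every compact subset of the open half-space $\{x_d > 0\}$, then $\omega_K \to \tfrac12$.

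For part (\ref{negativecase}) I would let all $h_i \to -\infty$. At the apex, each edge $\e_i - \v$ and $-\v$ has first $d-1$ coordinates $\approx -h_i \to +\infty$ and bounded last coordinate, so all generators become parallel (to $(-h_1,\dots,-h_{d-1},0)$); by (i) the apex angle $\to 0$. At a base vertex $\e_i$, taking $k = i$ and the sign $-1$, every base edge has nonpositive $i$-th coordinate while the edge to $\v$ has $i$-th coordinate $h_i - 1 \to -\infty$, so by (ii) this angle $\to 0$. At $\mathbf 0$ the cone $\cone\{\e_1,\dots,\e_{d-1},\v\}$ lies in $\{x_d \ge 0\}$, and for any target direction with $x_d = \zeta > 0$ the representation $\zeta\,\v + \sum a_i \e_i$ is solvable with $a_i = (\text{target})_i - \zeta h_i \ge 0$ once the $h_i$ are negative enough; by (iii) this angle $\to \tfrac12$. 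Summing the $d+1$ contributions gives $S(\Delta) \to \tfrac12$.

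For part (\ref{positivecase}), with $(h_1,\dots,h_{d-1}) = (h,h,1,\dots,1)$ and $h \to +\infty$, the apex angle again $\to 0$ by (i), as every edge from $\v$ becomes parallel to $(1,1,0,\dots,0)$. The difference from part (a) is that now \emph{every} base vertex admits a witnessing coordinate $k\in\{1,2\}$ for (ii) with all base edges nonnegative in coordinate $k$ and the edge to $\v$ having $k$-th coordinate $\ge h-1$: one takes $k=1$ at $\mathbf 0$ and at each $\e_i$ with $i\neq 1$, and $k=2$ at $\e_1$. Thus every base-vertex angle $\to 0$ and $S(\Delta)\to 0$. The main obstacle is conceptual rather than computational: a cone whose generators all approach a hyperplane need \emph{not} have vanishing solid angle — it collapses to measure zero only when squeezed into a band as in (ii), but opens up to a half-space as in (iii) when its in-hyperplane generators positively span. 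Correctly diagnosing this dichotomy — singling out $\mathbf 0$ as the unique half-space vertex in part (a) while showing every vertex is of band type in part (b) — is the heart of the argument; producing the witnessing coordinate $k$ at each vertex is then routine bookkeeping.
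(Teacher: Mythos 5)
Your proposal is correct, and for part (b) it takes a genuinely different route from the paper. For part (\ref{negativecase}) your treatment of the vertex $\mathbf 0$ is essentially the paper's argument (exhibiting any nearby point with positive last coordinate as a nonnegative combination of $\e_1,\dots,\e_{d-1}$ and the apex once the $h_i$ are negative enough); but where you go on to check that the remaining $d$ vertex angles tend to $0$, the paper instead stops after the origin and invokes its global bound $S(\Delta)<\tfrac12$ (Proposition~\ref{upperbound}) to squeeze the sum --- one vertex already contributes almost $\tfrac12$, so the rest must be negligible. Your version costs a little more bookkeeping but is self-contained. For part (\ref{positivecase}) the divergence is real: the paper computes the four vertex angles explicitly for $d=3$ via the spherical-excess formula of Proposition~\ref{3dformula} and then inducts on dimension using the containment $\Delta_{d+1}\subset\Delta_d\times[0,1]$ together with Lemma~\ref{inductionlemma} (which compares $\omega_{K\times[0,1]}$ to $\omega_K$), whereas your ``band'' estimate (ii) --- a witnessing coordinate $k$ in which all generators have one sign and the unique generator leaving the base hyperplane has $k$-th coordinate of size at least $M$, forcing the cone into the slab $0\le x_d\le 1/M$ --- handles every vertex in every dimension $d>2$ uniformly, with no trigonometry and no induction. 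Your closing remark correctly identifies the dichotomy (slab versus half-space) that makes (a) and (b) come out differently; the only point worth stating explicitly when writing this up is that in estimate (ii) the identity $v_d=c_j$ uses that all generators other than $u_j$ lie in $\{x_d=0\}$, which holds at the base vertices here but is not automatic for a general cone.
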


\begin{proof}
  We first prove (\ref{negativecase}). To show that $S(\Delta(h_1,
  \dots, h_{d-1})) \to \frac{1}{2}$ as $h_i \to -\infty$, it is enough
  to show that the solid angle at the origin approaches $\frac{1}{2}$
  by Proposition \ref{upperbound} below. Fix $\eps > 0$ sufficiently
  small. (It is enough to choose $\eps$ such that any point $x$ within
  $\eps$ of the origin satisfies $|x_1| + \cdots + |x_d| < 1$.)  To
  show that the solid angle at the origin approaches $\frac{1}{2}$, it
  is enough to show that any point with positive $x_d$ coordinate that
  is within $\eps$ of the origin is contained in $\Delta(h_1, \dots,
  h_{d-1})$ for sufficiently negative values of $h_i$. But this is
  clear: given such a point $\x = (x_1, \dots, x_d)$, assume for
  notational simplicity that its first $k$ coordinates and $x_d$ are
  the only positive entries. Then
  \[
  \x = (1-x_1-\cdots-x_k-x_d){\bf 0} + x_1\e_1 + \cdots + x_k\e_k +
  x_d(h_1, \dots, h_{d-1}, 1)
  \]
  with $h_1 = \cdots = h_k = 0$ and $h_i = x_i/x_d$ for $i=k+1, \dots,
  d-1$. In the case that $\x$ has different positive entries other
  than the first $k$ coordinates, it is trivial to modify the above
  argument.

  For (\ref{positivecase}), we need to show that the solid angle at
  each vertex approaches 0 as $h$ tends to $+\infty$. We do so by
  induction on dimension, starting with $d=3$ where we can use
  Proposition~\ref{3dformula}. Then the solid angles at ${\bf 0},
  \e_1, \e_2, (h,h,1)$, respectively, are as follows:
  \begin{align*}
    &\frac{1}{4\pi} \left( 2\cos^{-1}\left(
        \frac{h}{\sqrt{h^2+1}}\right) + \cos^{-1}\left(
        \frac{-h^2}{h^2+1}\right) - \pi\right),\\
    &\frac{1}{4\pi} \left( \cos^{-1}\left( \frac{h}{\sqrt{h^2+1}}
      \right) + \cos^{-1}\left( \frac{-2h+1}{\sqrt{4h^2-4h+3}}\right)
      + \cos^{-1}\left(
        \frac{2h^2-h+1}{\sqrt{(h^2+1)(4h^2-4h+3)}}\right) -
      \pi\right),\\
    &\frac{1}{4\pi} \left( \cos^{-1}\left( \frac{h}{\sqrt{h^2+1}}
      \right) + \cos^{-1}\left( \frac{-2h+1}{\sqrt{4h^2-4h+3}}\right)
      + \cos^{-1}\left(
        \frac{2h^2-h+1}{\sqrt{(h^2+1)(4h^2-4h+3)}}\right) -
      \pi\right),\\
    &\frac{1}{4\pi} \left( \cos^{-1}\left(
        \frac{-h^2}{\sqrt{h^2+1}}\right) + 2\cos^{-1}\left(
        \frac{2h^2-h+1}{\sqrt{(h^2+1)(4h^2-4h+3)}}\right) - \pi\right),
  \end{align*}
  and these all tend to 0 as $h \to \infty$.

  Now let $\Delta_d$ be the $d$-dimensional simplex
  $\Delta(h,h,1,\dots,1)$. Then $\Delta_{d+1} \subset \Delta_d \times
  [0,1]$, and the vertices of $\Delta_{d+1}$ are a subset of the
  vertices of $\Delta_d \times [0,1]$. Hence $S(\Delta_{d+1}) \le
  S(\Delta_d \times [0,1])$, and we finish by using
  Lemma~\ref{inductionlemma} below.
\end{proof}

\begin{lemma} \label{inductionlemma} Given a polyhedral cone $K$ with
  vertex $\x$, one has 
  \[
  \omega_{K \times [0,1]}((\x,0)) = \omega_{K \times [0,1]}((\x, 1)) \le
  c \, \omega_K(\x)
  \]
  where $c$ is a constant that only depends on $d = \dim K$.
\end{lemma}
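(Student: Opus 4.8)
The plan is to replace the slab $K\times[0,1]$ by a genuine cone near the corner and then evaluate the resulting solid angle exactly. The equality $\omega_{K\times[0,1]}((\x,0))=\omega_{K\times[0,1]}((\x,1))$ is the easy part: the reflection $(\y,s)\mapsto(\y,1-s)$ is an isometry of $\R^{d+1}$ that fixes $K\times[0,1]$ setwise and swaps the two corners, and solid angles are isometry invariants. For the inequality I would first note that the solid angle at $(\x,0)$ is a purely local quantity. Since $K$ is an honest cone with apex $\x$, writing $\bar K:=K-\x$ we have $K=\x+\bar K$, and for every radius $r<1$ the facet $\{s=1\}$ is invisible from $(\x,0)$, so $B(r,(\x,0))\cap(K\times[0,1])=B(r,(\x,0))\cap((\x+\bar K)\times\R_{\ge0})$. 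By the definition of $\omega$ and translation invariance this gives $\omega_{K\times[0,1]}((\x,0))=\omega_{\bar K\times\R_{\ge0}}(\mathbf{0})$, computed in $\R^{d+1}$.

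It then remains to compute $\omega_{\bar K\times\R_{\ge0}}(\mathbf{0})$. I would use that for any cone $C$ the solid angle equals the proportion $\vol_{d+1}(C\cap B(1,\mathbf{0}))/\vol_{d+1}(B(1,\mathbf{0}))$ of the unit ball it occupies, and slice the unit ball $B^{d+1}\subset\R^{d+1}$ by the last coordinate $s$. For fixed $s$ the slice of $\bar K\times\R_{\ge0}$ inside the ball is $\bar K\cap B^{d}_{\sqrt{1-s^2}}$; because $\bar K$ is scaling invariant, its $d$-volume is $(1-s^2)^{d/2}\,\vol_d(\bar K\cap B^d_1)=(1-s^2)^{d/2}\,\omega_K(\x)\,\vol_d(B^d_1)$, the last equality being the definition of the solid angle in ball form. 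Integrating over $s\in[0,1]$ and dividing by the same computation for the whole ball (where $s$ ranges over $[-1,1]$), the common factor $\int_0^1(1-s^2)^{d/2}\,ds$ cancels, leaving
\[
\omega_{\bar K\times\R_{\ge0}}(\mathbf{0})=\frac{\displaystyle\int_0^1(1-s^2)^{d/2}\,ds}{\displaystyle\int_{-1}^1(1-s^2)^{d/2}\,ds}\;\omega_K(\x)=\frac12\,\omega_K(\x).
\]
Thus the lemma holds with the explicit constant $c=\tfrac12$, independent of $d$, which is stronger than what is asserted. Equivalently, one can slice the sphere $S^d$ using the coordinates $(\cos\phi\cdot u,\sin\phi)$ with $u\in S^{d-1}$ and the surface-measure factorization $\cos^{d-1}\phi\,d\phi\,d\sigma_{S^{d-1}}$; the $\phi$-integral cancels in the same way.

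The computation has no serious obstacle; the two points that need genuine care are the local identification $(K\times[0,1])\cap B(r,(\x,0))=((\x+\bar K)\times\R_{\ge0})\cap B(r,(\x,0))$, which relies on $K$ being a true cone (so the identity is exact, not merely infinitesimal) and on choosing $r<1$, and the homogeneity step $\vol_d(\bar K\cap B^d_\rho)=\rho^d\vol_d(\bar K\cap B^d_1)$, which uses positive-scaling invariance of the cone $\bar K$. Everything else is a routine application of Fubini's theorem, and the cancellation of the $s$-integral is exactly what makes the constant clean and dimension-free.
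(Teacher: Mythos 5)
Your argument is correct, and it proves something strictly stronger than the lemma: an exact identity $\omega_{K \times [0,1]}((\x,0)) = \tfrac12\,\omega_K(\x)$, hence $c = \tfrac12$ independent of $d$. The paper takes a softer route: after reducing to radius $1$ (legitimate because the corner is locally conical within that radius), it observes that $B(1,(\x,0)) \cap (K \times [0,1])$ is contained in the cylinder $(B(1,\x) \cap K) \times [0,1]$, whose volume is $\vol_d(B(1,\x) \cap K) = \omega_K(\x)\vol_d(B(1,\x))$; dividing by $\vol_{d+1}(B(1,(\x,0)))$ yields the bound with $c = \vol_d(B^d)/\vol_{d+1}(B^{d+1})$, a dimension-dependent constant. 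Your Fubini computation replaces this one-line containment with an exact slicing of the $(d+1)$-ball, using homogeneity of $\bar K$ to pull out the factor $(1-s^2)^{d/2}$ and letting the $s$-integrals cancel; the two delicate points you flag (the local identification of $K\times[0,1]$ with $(\x+\bar K)\times\R_{\ge0}$ inside $B(r,(\x,0))$ for $r<1$, and scale-invariance of the cone) are exactly the right ones and both hold. For the application in Proposition~\ref{solidanglelimits} either constant suffices, since all that is needed is that $S(\Delta_d \times [0,1])$ is controlled by $S(\Delta_d)$ up to a constant depending only on $d$; your version buys a cleaner, sharper statement at the cost of a slightly longer computation, while the paper's containment argument is shorter but gives a weaker, dimension-dependent bound.
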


\begin{proof} The first equality follows by symmetry. We can write
  \[
  \omega_K(\x) = \frac{\vol(B(1,\x) \cap K)}{\vol(B(1,\x))}
  \]
  since $K$ is a polyhedral cone. Now let $C = B(1,\x) \times
  [0,1]$. The upper hemisphere of the $(d+1)$-dimensional ball of
  radius 1 centered at $(\x,0)$ is contained in $C$. Hence we have
  \[
  \vol(B(1,\x) \cap K) = \vol(C \cap (K \times [0,1])) \ge \vol(B(1,
  (\x,0)) \cap (K \times [0,1])),
  \]
  and setting 
  \[
  c = \frac{\vol(B(1,\x))}{\vol(B(1,(\x,0)))}
  \]
  proves the inequality. 
\end{proof}



Notice that 0 is an obvious lower bound for $S(\Delta)$. It turns out
that $\frac{1}{2}$ is the upper bound, as was already shown in
\cite{Barnette}.  We provide another proof here, with a pretty
argument due to Herbert Edelsbrunner and Igor Rivin (personal
communication).

\begin{proposition} \label{upperbound} Let $\Delta$ be a
  $d$-simplex. For $d = 2$, $S(\Delta) = \frac{1}{2}$, and for $d>2$,
  $S(\Delta) < \frac{1}{2}$.  
\end{proposition}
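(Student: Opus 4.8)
The plan is to realize $S(\Delta)$ as the expected value of a counting function over a uniformly random direction and then exploit the antipodal symmetry of the sphere. Writing $\Delta = \{ x : \langle a_k, x\rangle \le b_k,\ k = 0, \dots, d\}$ with outer facet normals $a_0, \dots, a_d$, the tangent cone at the vertex $v_i$ opposite the facet with normal $a_i$ is $C_i = \{ w : \langle a_k, w\rangle \le 0 \text{ for all } k \ne i\}$, and by definition of the solid angle $\omega_\Delta(v_i)$ equals the normalized spherical measure of $C_i \cap \mathrm{S}^{d-1}$. Hence, for a uniformly random $u \in \mathrm{S}^{d-1}$ and $N(u) := \#\{ i : u \in C_i\}$, one has $S(\Delta) = \sum_i \Pr(u \in C_i) = E[N(u)]$.

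The key observation I would establish first is that $N(u)$ is governed by the sign pattern of the numbers $\langle a_0, u\rangle, \dots, \langle a_d, u\rangle$. I would record that the normals of a simplex satisfy a positive dependence $\sum_k c_k a_k = 0$ with every $c_k > 0$ (this is exactly boundedness of $\Delta$), which rules out the all-positive and all-negative sign patterns. Since $u \in C_i$ forces every $\langle a_k, u\rangle$ with $k \ne i$ to be negative, a short case analysis on $p(u) := \#\{k : \langle a_k, u\rangle > 0\}$ shows $N(u) = 1$ exactly when $p(u) = 1$ and $N(u) = 0$ otherwise. Applying the same reasoning to $-u$, which flips all signs, gives $N(-u) = 1$ exactly when $p(u) = d$. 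As each locus $\langle a_k, u\rangle = 0$ is null, these identities hold almost everywhere.

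Combining these with the antipodal symmetry $E[N(u)] = E[N(-u)]$ yields
\[
S(\Delta) = \tfrac12 E[N(u) + N(-u)] = \tfrac12 \Pr\big(p(u) \in \{1, d\}\big) = \tfrac12 - \tfrac12 \Pr\big(2 \le p(u) \le d-1\big).
\]
For $d = 2$ the intermediate range $\{2, \dots, d-1\}$ is empty, so $S(\Delta) = \frac12$ unconditionally, recovering the elementary angle-sum fact for triangles. For $d > 2$ it remains only to show that the event $2 \le p(u) \le d-1$ has positive measure, i.e.\ that some open set of directions makes at least two of the $\langle a_k, u\rangle$ positive and at least two negative.

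The main obstacle is precisely this last nondegeneracy step, because the $d+1$ normals are linearly dependent and one cannot prescribe all $d+1$ signs freely. I would resolve it using that any $d$ of the normals of a genuine $d$-simplex are linearly independent: choosing $a_0, \dots, a_{d-1}$, the map $u \mapsto (\langle a_0, u\rangle, \dots, \langle a_{d-1}, u\rangle)$ is a linear isomorphism, so I can realize the values $(1, 1, -\eps, \dots, -\eps)$. The remaining inner product $\langle a_d, u\rangle = -\tfrac1{c_d}\sum_{k<d} c_k \langle a_k, u\rangle$ is then negative for small $\eps > 0$, producing an open set with $p(u) = 2 \in \{2, \dots, d-1\}$ (nonempty since $d \ge 3$). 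This forces $\Pr(2 \le p(u) \le d-1) > 0$ and hence the strict inequality $S(\Delta) < \frac12$.
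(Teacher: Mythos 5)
Your proof is correct, but it takes a genuinely different route from the paper's. The paper places one vertex of $\Delta$ at the origin, forms the half-open parallelepiped $\Pi$ spanned by the edge vectors $\v_1,\dots,\v_d$, notes that its lattice translates tile $\R^d$ so the solid angles at the vertices of $\overline{\Pi}$ sum to $1$, and then exhibits $\Delta$ and its point reflection through the center of $\Pi$ as two disjoint congruent copies inside $\Pi$; since they fail to fill $\Pi$ when $d>2$, one gets $2S(\Delta)<1$. You instead work on the sphere: writing $S(\Delta)=E[N(u)]$ for the number $N(u)$ of vertex tangent cones containing a uniform direction $u$, you show via the positive dependence $\sum_k c_k a_k=0$ among the outer facet normals that almost surely $N(u)=1$ exactly when $p(u):=\#\{k:\langle a_k,u\rangle>0\}$ equals $1$, and antipodal symmetry pairs $p=1$ with $p=d$, giving $S(\Delta)=\tfrac12-\tfrac12\Pr\bigl(2\le p(u)\le d-1\bigr)$; the nondegeneracy step producing an open set with $p(u)=2$ is handled correctly using that any $d$ of the normals are independent. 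The checks are all sound (the steps ``boundedness $\Leftrightarrow$ positive dependence'' and ``any $d$ normals are independent'' are standard facts about simplices that you correctly invoke). What each approach buys: the paper's tiling argument is shorter, needs no genericity or measure-zero bookkeeping, and identifies the deficit $\tfrac12-S(\Delta)$ concretely as half the vertex-angle mass of $\Pi\setminus(\Delta_1\cup\Delta_2)$; your argument yields an exact closed-form expression for the deficit in terms of the sign pattern of the facet normals, which is closer in spirit to the Perles--Shephard/Barnette angle-sum machinery cited in the paper and makes it transparent both why $d=2$ is exactly $\tfrac12$ and how the bound can be approached (by making the intermediate sign patterns rare), as in Proposition~\ref{solidanglelimits}.
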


\begin{proof} We may assume that one of the vertices is the
  origin. Let $\v_1, \v_2, \dots, \v_d$ be the other vertices of
  $\Delta$. Let $K$ be the cone generated by $\v_1, \dots, \v_d$. The
  fundamental parallelepiped defined by 
  \[
  \Pi = \{ \lambda_1\v_1 + \cdots + \lambda_d\v_d \mid 0 \le \lambda_i
  < 1\}
  \]
  tiles $K$, and thus the sum of the solid angles of the vertices of
  its closure is 1. Define
  \begin{align*}
    \Delta_1 &= \{\lambda_1\v_1 + \cdots + \lambda_d\v_d \mid 0 \le
    \lambda_i,\ \sum \lambda_i \le 1 \},\\
    \Delta_2 &= \{(1-\lambda_1)\v_1 + \cdots + (1-\lambda_d)\v_d \mid
    0 \le \lambda_i,\ \sum \lambda_i \le 1 \},
  \end{align*}
  which are subsets of $\Pi$. Then $\Delta_1 \cap \Delta_2 =
  \varnothing$ and $\Delta_1 \cup \Delta_2 \ne \Pi$ if $d>2$. So $\Pi$
  contains two congruent disjoint copies of $\Delta$, and hence
  $2S(\Delta) < 1$ when $d>2$.
\end{proof}


\section{Numerator polynomial of solid-angle
  series.} \label{nonnegativesection}

\begin{proof}[Proof of Theorem \ref{nonnegativethm}]
  Since solid angles are additive, it suffices to prove the statement
  for lattice simplices. If $\Delta = \conv \left\{ \v_1, \v_2, \dots,
    \v_{ d+1 } \right\} \subset \R^d$ is a lattice $d$-simplex, we
  form the \emph{cone over} $\Delta$:
  \begin{equation}\label{coneoverdef}
    \cone(\Delta) := \left\{ \lambda_1 (\v_1, 1) + \lambda_2 (\v_2, 1) +
      \cdots + \lambda_{ d+1 } (\v_{ d+1 }, 1) \mid \lambda_1,
      \lambda_2, \dots, \lambda_{ d+1 } \ge 0 \right\} \subset
    \R^{d+1}.
  \end{equation}
  We now consider codimension-1 solid angles in $\R^{ d+1 }$, by
  setting $f_{\cone(\Delta)}(\x)$ of a point $\x \in \cone(\Delta)$ to
  be the solid angle of $\x$ relative to the hyperplane through $\x$
  with normal vector $(0, 0, \dots, 0, 1)$. To be more precise, let $H
  = \left\{ \x \in \R^{ d+1 } \mid x_{ d+1 } = 0 \right\}$, then
  \[
  f_{\cone(\Delta)}(\x) = \omega_{ \cone(\Delta) \cap (x_{ d+1 } + H)}
  (\x) \,,
  \]
  where we are treating $x_{d+1} + H$ as an isomorphic copy of $\R^d$.
  Now we need a generating function that lists all function values of
  $f$ of the lattice points in a polyhedron $\P' \subset \R^{ d+1 }$:
  \[
  g_{\P'}(\z) := \sum_{\m \in \Z^{d+1}} f_{\P'}(\m) \, \z^\m .
  \]
  Here we are using the multivariate notation $\z^\m = z_1^{ m_1 }
  z_2^{ m_2 } \cdots z_{ d+1 }^{ m_{ d+1 } }$. As a function of $\P'$,
  $g_{\P'}(\z)$ is totally additive as long as the involved polyhedra
  have no facets parallel to $H$. We have now set the stage to use the
  machinery of \cite[Chapter 3]{ccd}, which in fact goes back to
  Ehrhart's original ideas.  The cone over $\Delta$ can be tiled with
  translates of the parallelepiped
  \begin{equation}\label{tilingeq}
    \Pi := \left\{ \lambda_1 (\v_1, 1) + \lambda_2 (\v_2, 1) + \cdots +
      \lambda_{ d+1 } (\v_{ d+1 }, 1) \mid 0 \le \lambda_1, \lambda_2,
      \dots, \lambda_{ d+1 } < 1 \right\} ,
  \end{equation}
  by nonnegative integer combinations $\sum n_i\v_i$. The total
  additivity of $g_{\P'}(\z)$ implies that
  \[
  g_{ \cone(\Delta) } (\z) = \frac{ g_{ \Pi } (\z) }{ \left( 1 - \z^{
        (\v_1, 1) } \right) \left( 1 - \z^{ (\v_2, 1) } \right) \cdots
    \left( 1 - \z^{ (\v_{d+1}, 1) } \right) } \, .
  \]
  Setting all but the last variable equal to 1 gives the generating
  function of the solid-angle polynomial of $\Delta$:
  \[
  \Solid_\Delta (z) = g_{ \cone(\Delta) } (1,1, \dots, 1, z) = \frac{
    g_\Pi (1, 1, \dots, 1, z) }{ (1-z)^{ d+1 } } \, .
  \]
  The coefficient of $z^k$ in the polynomial $g_\Pi (1, 1, \dots, 1,
  z)$ records the solid-angle sum of the points in $\Pi \cap \left\{
    \x \in \Z^{ d+1 } \mid x_{ d+1 } = k \right\}$, which is positive
  for $1 \le k \le d$ and 0 for $k=0$.
\end{proof}

It is tempting to conjecture that the coefficients of the numerator
polynomial form a unimodal sequence because of the palindromy, but
this turns out not to be the case.  For example, let $\Delta$ be a lattice 3-simplex
whose only integer points are its vertices, and let $S$ be the sum of
the solid angles at the vertices of $\Delta$. Then 
\[
A_\Delta(t) = \frac{1}{6} t^3 + \left(S-\frac{1}{6}\right) t\,,
\]
so
\[
\Solid_\Delta(z) = \frac{Sz^3 + (1-2S)z^2 + Sz}{(1-z)^4}\,.
\]
If the numerator polynomial is unimodal, then $1-2S \ge S$, which
implies $S \le \frac{1}{3}$. In Section~\ref{vertexsection}, we gave a
class of simplices for all dimensions whose only integer points are
its vertices and whose solid-angle sum $S$ converges to
$\frac{1}{2}$. A similar computation in dimension 4 shows that $S >
\frac{1}{4}$ means that $\Delta$ does not have a unimodal numerator
polynomial.

It would be interesting to find nice conditions for when the numerator
polynomial is unimodal, however. In dimension 3, if $\vol(\P) t^3 +
ct$ is the solid-angle polynomial of a polytope $\P$, then the
numerator polynomial is 
\[
(\vol(\P)+c)z^3 + (4\vol(\P) - 2c) z^2 + (\vol(\P) + c)z,
\]
so is unimodal if and only if $c \le
\vol(\P)$. Proposition~\ref{reeveprop} gives an infinite family of
3-polytopes whose solid-angle polynomial has $c < 0$, which at least
gives some examples.


\section{Nonnegativity results for valuation generating functions.} \label{sec:valuation}

Let $\nu$ be a valuation and write 
\[
G_{\P}(z) = \frac{a_d(\P) z^d + a_{d-1}(\P) z^{d-1} + \cdots + a_0(\P)}{(1-z)^{d+1}}
\]
for a lattice polytope $\P \subset \R^d$. In the previous version of the article, we stated that if $\nu$ is translation-invariant and nonnegative, then $a_j(\P) \ge 0$ for $j = 0, 1,  \dots, d$, and furthermore, if $\P \subseteq \mathcal{Q}$ where $\mathcal{Q}$ is also a lattice polytope, then $a_j(\P) \le a_j(\mathcal{Q})$ for $j=0,1,\dots,d$.

However, the proofs for these results contained an error, and indeed, they are not correct as stated. This was pointed out to us by Katharina Jochemko and Raman Sanyal. The main result of their paper \cite{jochemko-sanyal} gives a description of those valuations $\nu$ for which the above two statements are correct.


\section{Some remarks on period collapse.} \label{periodsection}

Suppose $\P$ is a rational polytope. Then $A_\P(t)$ is a
quasipolynomial, whose period divides the least common multiple of the
denominators of the vertex coordinates of $\P$. For a generic
polytope, this number \emph{is} the period of $A_\P(t)$. However, in
analogy with Ehrhart quasipolynomials, we expect that there are
special classes of polytopes that exhibit \emph{period collapse},
i.e., $A_\P(t)$ is a polynomial. Here is one such family:

\begin{proposition} The polytope $\P = [0,\frac{1}{2}] \times
  [0,1]^{d-1}$ has solid-angle polynomial $\frac{1}{2}t^d$.
\end{proposition}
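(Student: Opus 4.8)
The plan is to realize the unit cube as the union of two congruent copies of $\P$ and to exploit both the additivity of solid angles and a reflection symmetry. Write $\P = [0,\tfrac12] \times [0,1]^{d-1}$ and let $\P' = [\tfrac12,1] \times [0,1]^{d-1}$ be its mirror image across the hyperplane $x_1 = \tfrac12$, so that $\P \cup \P' = [0,1]^d$ while the overlap $\P \cap \P' = \{\tfrac12\} \times [0,1]^{d-1}$ is $(d-1)$-dimensional. Fixing a positive integer $t$, I would apply the inclusion--exclusion property of the valuation $\omega$ pointwise at each $\x \in \Z^d$: since $t\P \cup t\P' = [0,t]^d$ and $t\P \cap t\P' = \{t/2\} \times [0,t]^{d-1}$ lies in a hyperplane and hence has $d$-dimensional volume zero, its solid angle $\omega_{t\P \cap t\P'}(\x)$ vanishes for every $\x$. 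Inclusion--exclusion therefore reduces to
\[
\omega_{[0,t]^d}(\x) = \omega_{t\P}(\x) + \omega_{t\P'}(\x),
\]
and summing over $\x \in \Z^d$ gives $A_{[0,1]^d}(t) = A_\P(t) + A_{\P'}(t)$.

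Next I would record the reflection symmetry. The affine map $\phi_t(x_1, \dots, x_d) = (t - x_1, x_2, \dots, x_d)$ is an isometry carrying $t\P = [0,t/2] \times [0,t]^{d-1}$ onto $t\P' = [t/2,t] \times [0,t]^{d-1}$, and, being the reflection $x_1 \mapsto -x_1$ followed by the integer translation by $(t,0,\dots,0)$, it preserves $\Z^d$. Since solid angles are invariant under isometries and $\phi_t$ restricts to a bijection of $\Z^d$, reindexing the sum yields $A_{\P'}(t) = A_\P(t)$. Combining this with the previous paragraph gives $A_{[0,1]^d}(t) = 2A_\P(t)$, and invoking the identity $A_{[0,1]^d}(t) = t^d$ established in the proof of Proposition~\ref{permutationsimplex} lets me conclude that $A_\P(t) = \tfrac12 t^d$. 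This is an honest polynomial in $t$, so the solid-angle quasipolynomial of $\P$ collapses to a polynomial despite the denominator $2$ appearing in a vertex coordinate.

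The argument involves no delicate estimates, so rather than a genuine obstacle there are two observations that form the crux and deserve explicit justification. The first is the vanishing of $\omega$ on the lower-dimensional overlap $t\P \cap t\P'$, which is immediate from the measure-theoretic definition of the solid angle but is exactly what makes the inclusion--exclusion collapse cleanly. The second is that $\phi_t$ is a lattice-preserving isometry; this requires $t \in \Z$ and is what guarantees that the reflection leaves the solid-angle sum unchanged. Once these two points are in place, the rest is purely formal.
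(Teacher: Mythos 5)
Your proof is correct and follows essentially the same route as the paper's: decompose the unit cube as $\P \cup \P'$, use additivity (the overlap being lower-dimensional), and use the congruence of $\P$ with $\P'$ together with $A_{[0,1]^d}(t)=t^d$ from Proposition~\ref{permutationsimplex}. You merely spell out two details the paper leaves implicit, namely that the overlap has vanishing solid angle at every point and that the congruence is realized by a lattice-preserving isometry; both observations are accurate and worth making explicit.
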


\begin{proof} 
  As seen in the proof of Proposition~\ref{permutationsimplex}, the
  solid-angle polynomial of the unit cube is $t^d$. In fact, $\P$ and
  $\P' = [\frac{1}{2},1] \times [0,1]^{d-1}$ are congruent to one
  another, so have the same solid-angle enumerator. Since their union
  is the unit cube, we conclude the desired result by additivity.
\end{proof}

We can completely classify period collapse in dimension 1. Let $\P =
[a,b]$. There are two cases to consider. In the first case, $a \in \Z$
and $b \in \Q \setminus \Z$. Let $f_0, \dots, f_{p-1}$ be the
constituent polynomials of $A_{\P}(t)$; that is, $A_\P(t) = f_j(t)$ if
$t \equiv j \bmod p$.  Then $f_0 = (b-a)t$, so in order for
$A_{\P}(t)$ to be a polynomial, we need that $A_{\P}(1) = b-a$. We
know that $A_{\P}(1) = \lfloor b\rfloor - a + \frac{1}{2}$, so we
conclude $b = \lfloor b\rfloor + \frac{1}{2}$, and thus $\P$ must have
denominator 2. If both $a$ and $b$ are nonintegral, then $A_{\P}(1) =
\lfloor b\rfloor - \lceil a\rceil + 1$, so for period collapse to
occur, this must be equal to $b-a$. This means that $\P$ is of the
form $[\frac{p}{q}, \frac{p}{q} + n]$ where $n \in \Z$. For $t \nmid
q$, there is a bijection between points of $t\P$ and the points of
$(0,tn]$, so $A_{\P}(t) = nt$ is given by adding $\frac{p}{q}$ to
$(0,tn]$. We summarize these results.

\begin{proposition} Let $\P = [a,b]$ be a 1-dimensional rational
  polytope. Then $A_{\P}(t)$ is a polynomial if and only if one of the
  following holds:
  \begin{enumerate}[{\rm (1)}]
  \item $a \in \Z, 2b \in \Z$ or $2a \in \Z, b \in \Z$
  \item $b-a \in \Z$
  \end{enumerate}
\end{proposition}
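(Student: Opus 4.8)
The plan is to reduce the question of period collapse to a single clean identity and then dispatch it by a short case analysis on the integrality of the endpoints. First I would observe that, since $\dim \P = 1$, the quasipolynomial $A_\P(t)$ has degree $1$, and each of its constituent polynomials has leading coefficient equal to the volume $b-a$; moreover $A_\P(0) = 0$ by Macdonald reciprocity \eqref{macreciprocity}. Consequently $A_\P(t)$ is a genuine polynomial if and only if $A_\P(t) = (b-a)\,t$ for every $t \in \Z_{>0}$. This converts the problem into checking one linear identity and, in particular, shows that the value at $t = 1$ already forces the relevant necessary conditions.

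Next I would record the one-dimensional solid-angle weights explicitly: for the interval $t\P = [ta, tb]$, an integer point strictly between the endpoints contributes $1$, each integer endpoint contributes $\tfrac12$, and all other lattice points contribute $0$. Thus $A_\P(t)$ equals the number of integers in the open interval $(ta, tb)$ plus $\tfrac12$ times the number of the two endpoints $ta, tb$ that happen to be integral. Since $\omega$ is translation-invariant and $\Z$ is preserved by integer shifts, I may also translate $\P$ by an integer without changing $A_\P$, which normalizes one endpoint in the mixed case.

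The heart of the argument is then a three-way split according to whether $a$ and $b$ are integral. If both are integers, $\P$ is a lattice polytope and $A_\P$ is automatically a polynomial; here $b-a \in \Z$, so (2) holds. If exactly one endpoint is integral, say $a \in \Z$ and $b \notin \Z$, then evaluating at $t=1$ gives $A_\P(1) = \lfloor b\rfloor - a + \tfrac12$, and matching this with $b-a$ forces $b \in \tfrac12 + \Z$, i.e. $2b \in \Z$; conversely, writing $b = m + \tfrac12$ and splitting on the parity of $t$ verifies $A_\P(t) = (b-a)t$ directly, because the half-weight at the integer endpoint exactly compensates for the interior lattice point lost when $tb$ becomes integral. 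If neither endpoint is integral, the $t=1$ count $\lfloor b\rfloor - \lceil a\rceil + 1$ must equal $b-a$, which is equivalent to $\{a\} = \{b\}$, i.e. $b - a \in \Z$; conversely, when $b - a = n \in \Z$ I would split on whether $ta$ is integral (equivalently whether the denominator of $a$ divides $t$) and check that an open interval of integer length $nt$ with non-integral endpoints always carries solid-angle mass exactly $nt$, while an interval with integral endpoints carries $(nt-1) + 2\cdot\tfrac12 = nt$. Assembling the three cases yields precisely the dichotomy (1) or (2).

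I expect the only delicate point to be the bookkeeping of the $\tfrac12$-weights at the endpoints in the sufficiency directions: one must confirm that, as $t$ ranges over all positive integers, the boundary contribution toggles in exactly the way needed to keep $A_\P(t)$ equal to $(b-a)t$, rather than merely matching at $t=1$. Everything else reduces to routine floor-function computations.
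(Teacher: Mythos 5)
Your proposal is correct and follows essentially the same route as the paper: reduce to the identity $A_\P(t)=(b-a)t$, evaluate at $t=1$ to extract the necessary conditions, and split on the integrality of the endpoints. If anything, you are more explicit than the paper about the sufficiency direction (the parity check when $b\in\tfrac12+\Z$ and the divisibility check when $b-a\in\Z$), which the paper largely leaves to the reader.
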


In higher dimensions we can at least say the following. For $0 \le j
\le d = \dim \P$, define the {\it $j$-index} of $\P$ to be the minimal
positive integer $p_j$ such that the affine span of each
$j$-dimensional face of $\P$ contains an integer point. Then the $p_j$
give bounds on the periods of coefficient functions of $A_\P(t)$.

\begin{proposition} Given a rational $d$-polytope $\P$ with $j$-index
  $p_j$, write the solid-angle polynomial as $A_\P(t) = c_d(t) t^d +
  c_{d-2}(t) t^{d-2} + \cdots + c_1(t) t + c_0(t)$ (where $c_1(t) = 0$
  if $d$ is even, and $c_0(t) = 0$ if $d$ is odd). Then the minimum
  period of $c_j(t)$ divides $p_j$ for all $j$.
\end{proposition}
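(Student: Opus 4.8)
The plan is to pass through the face decomposition \eqref{polysum} and reduce everything to a statement about the individual coefficients of the Ehrhart quasipolynomials $L_{\F^\circ}(t)$. Write $L_{\F^\circ}(t) = \sum_{i=0}^{\dim \F} b_i^\F(t)\, t^i$ with each $b_i^\F(t)$ periodic; since $L_{\F^\circ}(t)$ has degree $\dim \F$, the coefficient $c_j(t)$ of $t^j$ in $A_\P(t)$ is
\[
c_j(t) = \sum_{\F \colon \dim \F \ge j} \omega_\P(\F)\, b_j^\F(t),
\]
the sum ranging only over faces of dimension at least $j$ because lower-dimensional faces contribute nothing to $t^j$. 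As a finite sum of periodic functions, $c_j(t)$ has $p_j$ as a period as soon as each $b_j^\F(t)$ does, so it suffices to bound the period of each $b_j^\F(t)$ individually. (When $j \not\equiv d \pmod 2$ the coefficient $c_j(t)$ vanishes identically by \eqref{macreciprocity}, so its period is $1$ and there is nothing to prove.)

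The substantive input is the following period lemma for Ehrhart quasipolynomials: for a rational polytope $Q$ of dimension $k \ge j$, the coefficient of $t^j$ in $L_Q(t)$ has minimum period dividing the minimal positive integer $p$ for which the affine hull of each $j$-dimensional face of $Q$, dilated by $p$, contains a lattice point. This is exactly the period content of McMullen's theory of lattice-invariant valuations on rational polytopes. To transfer it to the relative-interior enumerators appearing above, I would invoke the reciprocity law \eqref{reciprocity}: since $L_{\F^\circ}(t) = (-1)^{\dim \F} L_\F(-t)$, the coefficient of $t^j$ in $L_{\F^\circ}(t)$ differs from that of $L_\F(t)$ only by a sign and by the substitution $t \mapsto -t$ inside the periodic part, neither of which changes the minimum period. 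Hence it is equivalent, and cleaner, to apply the lemma to $L_\F(t)$ itself.

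The remaining step is a purely combinatorial observation linking the lemma to the $j$-index $p_j$ of $\P$. Every $j$-dimensional face of a face $\F$ is itself a $j$-dimensional face of $\P$ with the same affine hull, so the minimal dilation rendering the affine hulls of all $j$-faces of $\F$ lattice-accessible divides the analogous quantity taken over \emph{all} $j$-faces of $\P$, namely $p_j$. Combining this divisibility with the lemma shows that each $b_j^\F(t)$ has period dividing $p_j$, and therefore so does $c_j(t)$, which is the assertion.

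The main obstacle is the period lemma itself, i.e.\ proving that the period of the degree-$j$ coefficient is governed by the $j$-dimensional faces rather than by the global denominator of the polytope. If one is content to cite McMullen this is immediate, and the proof reduces to the assembly above. To argue in the spirit of this paper instead, I would triangulate $\cone(Q)$, apply the irrational-shift trick of \cite{irrational} exactly as in the proof of Theorem~\ref{valuationthm} to reduce to simplicial cones, and then analyze the rational generating function $\sigma_{\v+\K}(1,\dots,1,z)$; the degree-$j$ part of the resulting quasipolynomial is controlled by the lattice points in the $j$-dimensional sub-parallelepipeds, whose periodic behavior depends only on the $j$-dimensional faces. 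Isolating this degree-$j$ contribution and identifying its period with the $j$-face dilation index is where the real work would lie.
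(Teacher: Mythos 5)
Your proposal is correct and follows essentially the same route as the paper: reduce via the face decomposition \eqref{polysum} to the corresponding period statement for Ehrhart quasipolynomials (McMullen's theorem), combined with the observation that the $j$-index of each face divides $p_j$. You simply supply more detail (the reciprocity step for the open-face enumerators and the bookkeeping of periodic coefficients) than the paper's one-line argument.
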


\begin{proof} The $j$-index of a face of $\P$ divides $p_j$, so this
  follows from the identity \eqref{polysum} and the fact that the
  corresponding theorem for Ehrhart polynomials holds (see
  \cite[Theorem 6]{mcmullenreciprocity}).
\end{proof}


\def\cprime{$'$} \def\cprime{$'$}
\providecommand{\MR}{\relax\ifhmode\unskip\space\fi MR }
\providecommand{\MRhref}[2]{%
  \href{http://www.ams.org/mathscinet-getitem?mr=#1}{#2}
}
\providecommand{\href}[2]{#2}

\end{document}